\documentclass[11pt]{amsart}

\usepackage{amsmath, amsthm, amssymb, amsfonts, amscd, latexsym}
\usepackage[alphabetic]{amsrefs}

\usepackage[T1]{fontenc}
\usepackage{dsfont}
\usepackage{mathtools}              % math tools
\usepackage{mathrsfs}
\usepackage{palatino}               % text font
\usepackage{comment}

\usepackage[abs]{overpic}		  % overpic automatically loads graphicx

\usepackage{xcolor}
\definecolor{indigo}{rgb}{0.29, 0.0, 0.51}  % custom colors
\usepackage[colorlinks, urlcolor=indigo, linkcolor=indigo, citecolor=indigo]{hyperref}
\usepackage{cleveref}

\usepackage[hcentering, top = 1.5in, total={6in, 8.3in}]{geometry}  % 1.35 vertical margin; 1.25 horizontal margin

\usepackage{tikz} %provides edge/point drawing
    \usetikzlibrary{arrows,shapes}
    \usetikzlibrary{fit,shapes.geometric} %provides tight ellipse around nodes
    \usetikzlibrary{decorations.pathreplacing,angles,quotes} %brackets    
\usepackage{tikz-cd}

\theoremstyle{plain}
\newtheorem{theorem}{Theorem}
\numberwithin{theorem}{section}

\newtheorem{proposition}[theorem]{Proposition}

\theoremstyle{definition}
\newtheorem*{acknowledgments}{Acknowledgments}

\theoremstyle{remark}
\newtheorem{remark}[theorem]{Remark}
\newtheorem{example}[theorem]{Example}

\newcommand{\RR}{\mathbb{R}}           % the real numbers\

\newcommand{\PP}{\mathbb{P}}
\newcommand{\TT}{\mathbb{T}}
\newcommand{\CP}{\mathbb{CP}}         % complex projective space
\makeatletter
\newcommand*\bigcdot{\mathpalette\bigcdot@{0.6}}
\newcommand*\bigcdot@[2]{\mathbin{\vcenter{\hbox{\scalebox{#2}{$\m@th#1\bullet$}}}}}
\makeatother

\begin{document}
%%%%%%%%%%%%%%%%%%%%%%%%%%%%%%%%%%%%%%%%%%
% title
\title[]{K\"{a}hler-Ricci Flow is Symplectic Deflation plus an Exact Symplectic Deformation}

% author information

\author{Randall R. Van Why}

\address{School of Mathematics \\ Georgia Institute of Technology \\  Atlanta, GA}
\email{randyvw@gatech.edu}

% abstract
\begin{abstract}
We demonstrate that K\"{a}hler-Ricci flow on a Fano surface decomposes as a symplectic deflation along a plurianticanonical divisor plus an exact symplectic deformation in the divisor complement. We apply this decomposition to the study of certain projective bundles. We establish the optimal decay rate in the setting of Song, Sz{\'e}kelyhidi, and Weinkove for a family of toric metrics which match the cohomological profile of K\"{a}hler-Ricci flow.  In doing so, we identify a $C^2$-obstruction to proving the optimal decay rate in the setting of ruled toric surfaces. We present these ideas as a new symplectic-topological approach to the study of K\"{a}hler-Ricci flow.
\end{abstract}

\maketitle

\tableofcontents

%\textbf{Your job as a Geometer is to elucidate the structure of the spirit of Geometry in the most beautifully meaningful way possible.}

\section{Introduction}
Suppose that $(X,g, \omega)$ is a K\"{a}hler manifold. The K\"{a}hler-Ricci flow equation \cite{Cao1985, SongWeinkove2013} is given by
\begin{equation}\label{eqn:Ricci_flow}
\begin{cases}
    \partial_t \omega_t = -Ric_{\omega_t}\\
    \omega_0 = \omega
\end{cases}
\end{equation}
Any 1-parameter family $(X,g_t,\omega_t)$ of K\"{a}hler manifolds satisfying this equation is called a \emph{solution} to the K\"{a}hler-Ricci flow equation. Here $Ric_{\omega_t}$ is the Ricci form of the K\"{a}hler metric $g_t$ defined by 
\[Ric_{\omega_t} = -i\partial \overline{\partial} \log\left(\det g_{j\overline{k}}(t)\right)\]
Cohomologically 
\[[Ric_{\omega_t}] = 2\pi c_1(X)\]
and so we have an induced flow on cohomology
\begin{equation}\label{eqn:cohomology}
\begin{cases}
    \partial_t [\omega_t] = -2\pi c_1(X)\\
    [\omega_0] = [\omega]
\end{cases}
\end{equation}
which has a unique solution $[\omega_t] = [\omega_0] - 2\pi t c_1(X)$. 

This observation leads one to conclude that for \emph{Fano} manifolds, manifolds with $c_1(X) > 0$, one should expect a finite extinction time for $X$ under the K\"{a}hler-Ricci flow. In 2003 through public communication during a visit to MIT (c.f. \cite{SesumTian2008}), Perelman demonstrated that for a Fano manifold $X$, under the assumption that $[\omega_0] = 2\pi Tc_1(X)$, one must have 
\begin{align}
    diam_{\omega_t}(X) \le C\sqrt{T-t}
\end{align} 
where the left hand side is the diameter of $X$ with respect to the K\"{a}hler metric $\omega_t$. Sesum and Tian in \cite{SesumTian2008} published a proof of Perelman's estimate. It was then conjectured by Tian \cite{Tian2008} and later proven by Song \cite{Song2014} that the condition $[\omega_0] = 2\pi Tc_1(X)$ holds if and only if $X$ is Fano, so one should always expect such a decay rate in that setting. This fact and other related results \cite{FeldmanIlmanenKnopf2003,TianZhang2006,Tian2008}  have greatly contributed to the present day analytic minimal model program which attempts to center K\"{a}hler-Ricci flow as a process that tends to converge toward minimal birational models through singularities such as analytic blowdowns \cite{SongWeinkove2013:contract, SongWeinkove2014}.

Perelman's diameter decay rate estimate has been shown to have a major impact on the strucuture of other complex manifolds under K\"{a}hler-Ricci flow. In \cite{SongWeinkove2011}, Song and Weinkove make advances toward systemetizing these Fano collapse results to Fano fibrations through their study of Hirzebruch surfaces. Subsequent work by \cite{Fong2014} on projective bundles continued this general direction. A major breakthrough came with \cite{SongSzekelyhidiWeinkove2013} in which they proved that a projective bundle $\pi \colon (X,\omega_0) \to (B,\omega_B)$ under the assumption that 
\[[\omega_0] - 2\pi Tc_1(X) = [\pi^*\omega_B]\]
admits the following estimate for the diameter of the fiber $F$
\[diam_{\omega_t}(F) \le C(T-t)^{1/3}.\]
In their work, it is noted that this falls short of the expected optimum $\sqrt{T - t}$ bound. Several improvements were made in various settings and this thinking was applied to other Fano fibrations as well \cite{FuZhang2017,Shen2021,CollinsTosatti2015,TosattiZhang2018}. In 2015, Fong achieved the optimal upper bound for the diameter of the fiber under the assumption of a Ricci curvature upper bound \cite{Fong2015}. For projective bundles, research developments resulted in Zhang-Zhang's 2023 improvement of the exponent to $1/2- \varepsilon$ for every $\varepsilon > 0$, just shy of the optimum 1/2 \cite{ZhangZhang2023}. The author also notes very recent results due to Bednarek \cite{Bednarek2025}, Jian-Song \cite{JianSong2026,JianSong2026b}, and Xu-Zheng \cite{XuZhang2026} which achieve the optimum decay rate in various settings.

In this article, we use symplectic geometry to improve our understanding of the optimal collapse rate in the settings of toric ruled surfaces. We use our findings in order to show the following theorem in that context (c.f. \cite{SongSzekelyhidiWeinkove2013}, Section 1).
\begin{theorem}\label{thm:main}
    Let $(X,g,\omega)$ be a toric ruled surface with compatible ruling. Let $(g_t,\omega_t)$ be the K\"{a}hler-Ricci flow on $X$ starting at $(g,\omega)$ such that the volume of the fiber $F$ collapses at time $T > 0$. Then there exists a 1-parameter family of toric K\"{a}hler metrics $\widetilde{\omega}_t$ such that 
    \[\omega_t = \widetilde{\omega}_t + i\partial \overline{\partial}\psi_t\]
    and we have the following estimate
    \[\text{diam}_{\widetilde{\omega}_t}(F) \le C(T - t)^{1/2}.\]
    This establishes the sharp upper bound in the setting of Song-Sz\'{e}kelyhidi-Weinkove in \cite{SongSzekelyhidiWeinkove2013} for the family $\widetilde{\omega}_t$.
\end{theorem}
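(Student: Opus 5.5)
Since $X$ is a toric ruled surface with compatible ruling, I will work on the moment polytope side. The flow fixes the cohomology class $[\omega_t] = [\omega_0] - 2\pi t c_1(X)$. For a toric surface this class is encoded by a Delzant polytope
\[P_t=\{x\in\RR^2:\ \langle x,v_i\rangle\ge -\lambda_i(t)\},\qquad \lambda_i(t)=\lambda_i(0)-t,\]
with facet normals $v_i$. Here I have normalized so that $-2\pi c_1(X)$ moves every facet inward at unit speed, since the anticanonical divisor is the sum of all toric divisors.

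For a Hirzebruch-type polygon $P_t$, the ruling is compatible with the toric structure: the fiber class $F$ corresponds to the pair of parallel edges, and its symplectic area is the width $w(t)$ of $P_t$ in the fiber direction. This width is affine in $t$, so by the collapse hypothesis $w(t)=c(T-t)$ for some $c>0$. The base class stays bounded below, which is exactly the setting $[\omega_T]=[\pi^*\omega_B]$ of \cite{SongSzekelyhidiWeinkove2013}.

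I will define $\widetilde\omega_t$ as the toric Kähler metric given by Guillemin's canonical symplectic potential
\[u_t=\tfrac12\sum_i \ell_i\log\ell_i,\qquad \ell_i(x)=\langle x,v_i\rangle+\lambda_i(t),\]
on $P_t$, pulled back to $X$ via the standard identification of the open dense orbit with $\RR^2\times\TT^2$. Then $[\widetilde\omega_t]=[\omega_t]$. Both forms are $\TT^2$-invariant and $X$ is simply connected compact Kähler, so the $\partial\overline\partial$-lemma gives a $\TT^2$-invariant $\psi_t$ with $\omega_t=\widetilde\omega_t+i\partial\overline\partial\psi_t$. I would phrase this existence step through the paper's decomposition: deflation along the plurianticanonical divisor $\sum D_i$ accounts for the cohomology change, and the remainder is an exact deformation in the complement. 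Only the existence of $\psi_t$ is needed here, not any estimate on it.

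For the diameter bound, I will fix a fiber $F$. In action-angle coordinates it is the $S^1$-invariant sphere over a segment $\{x_1=\text{const}\}\cap P_t$ of length $w(t)$ in the $x_2$ direction. Its metric is
\[u_{22}^{-1}\,d\theta_2^2+u_{22}\,dx_2^2,\qquad u_{22}=\tfrac12\Big(\tfrac{1}{x_2-a}+\tfrac{1}{b-x_2}\Big)+(\text{terms from the slanted edges}),\qquad b-a=w(t).\]
The meridian distance is $\int_a^b\sqrt{u_{22}}\,dx_2$, and the circle lengths are at most $2\pi\,\sup u_{22}^{-1/2}$.

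The key steps are to show that the contributions to $u_{22}$ from the non-fiber facets are nonnegative, being of the form $\frac{\langle v_i,e_2\rangle^2}{2\ell_i}$, and that they only increase $u_{22}$. Discarding them gives $u_{22}^{-1}\le 2\min(x_2-a,\,b-x_2)\le w$, so circle lengths are $O(\sqrt w)$. For the meridian I need an upper bound on $u_{22}$. The term $\frac{\langle v_i,e_2\rangle^2}{2\ell_i}$ is large only near the slanted edges, where $\ell_i$ is small. Over a given $x_1$, however, $\ell_i$ stays bounded below by a constant times the base coordinate distance, so these terms blow up only at the fibers over the poles of the base. Those fibers are themselves toric-fixed configurations and need separate care.

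The main obstacle is uniformity near the vertices of $P_t$. There the segment length and the slanted facets degenerate together, and the scaling $\int_a^b (x_2-a)^{-1/2}\,dx_2\sim\sqrt w$ must survive the extra terms. My plan is to rescale $x_2-a=w s$ and write $u_{22}\,dx_2^2 = w\,\big(\tfrac{1}{2s}+\tfrac{1}{2(1-s)}+w\,R\big)\,ds^2$. Then I need to bound $w R$ using the explicit linear forms of the Hirzebruch polygon. Near a corner this should reduce to a computation on a standard $\CP^1\times\CP^1$ or $\mathcal O(-k)$ model polygon. The bound is finally transferred to every fiber by the triangle inequality together with $\TT^2$-invariance, giving $\operatorname{diam}_{\widetilde\omega_t}(F)\le C\sqrt{T-t}$.
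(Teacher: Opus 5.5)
There is a genuine gap: the surface and metric you estimate are not the fiber and its induced metric, except for $\PP^1\times\PP^1$. The fiber of the ruling is the holomorphic curve on which the base coordinate $e^{\partial_{x_1}u_t+i\theta_1}$ is constant. Its moment image is therefore the curve $\gamma_F=\{\partial_{x_1}u_t=\mathrm{const}\}$, not the vertical segment $\{x_1=\mathrm{const}\}$. These agree for all fibers only if $u_{12}\equiv 0$, i.e.\ for $\PP^1\times\PP^1$.

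Take the Hirzebruch trapezoid $\{x_1\ge 0,\ 0\le x_2\le b,\ x_1\le a+nx_2\}$ with $n\ge 1$. The Guillemin potential gives $\partial_{x_1}u=\frac12\log\frac{x_1}{a+nx_2-x_1}$, so the fibers lie over the slanted lines $x_1=\frac{C}{1+C}(a+nx_2)$, fanning out between the two non-parallel edges. Vertical lines $x_1=c$ with $c>a$ do not even reach the edge $x_2=0$, so they do not all have length $w$.

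The induced metric is $S\,dx_2^2+S^{-1}d\theta_2^2$, where $S=\nabla^2u_t(\dot\gamma_F,\dot\gamma_F)=u_{22}-u_{12}^2/u_{11}$ is evaluated along $\gamma_F$, not $u_{22}$ on a vertical slice. Your formula is not even the induced metric of the sphere $\{x_1=c,\ \theta_1=\theta_0\}$, whose angular coefficient is $u^{22}\neq u_{22}^{-1}$. In the example one finds $S=\frac12\big(\frac1{x_2}+\frac1{b-x_2}+\frac{n^2}{a+nx_2}\big)$. By contrast, $u_{22}$ along $\gamma_F$ contains $\frac{(1+C)n^2}{2(a+nx_2)}$, because the slanted-edge terms partially cancel in the Schur complement. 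So bounding $S$ by $u_{22}$ loses uniformity as the fiber approaches the one over a base pole.

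The decisive step is also missing. You need an $O(\sqrt{w})$ bound on the meridian length even though facet terms blow up near the non-parallel edges and the vertices. You only announce this (rescaling to model polygons) and do not prove it. The paper's device settles it directly and makes any vertex analysis unnecessary. Set $f_k=\ell_k\circ\gamma_F$; then $S=\frac12\sum_k (f_k')^2/f_k$, hence
\[\int\sqrt{S}\,dx_2\le\frac{1}{\sqrt2}\sum_k\int\frac{|f_k'|}{\sqrt{f_k}}\,dx_2\le\sqrt2\,K\sum_k\big(\sqrt{f_{k,\max}}-\sqrt{f_{k,\min}}\big)\le\sqrt2\,K\sum_k\sqrt{f_{k,\max}-f_{k,\min}}.\]
Here $K$ bounds the number of points of $\gamma_F$ on a level line of $\ell_k$. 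This follows from Bezout, since $\gamma_F$ is a level set of a product of integer powers of the $\ell_k$.

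Moreover $f_{k,\max}-f_{k,\min}\le Cw$, because $|f_k'|$ is bounded: $\gamma_F$ has slope $|u_{12}/u_{11}|$, which is controlled by the facet normals. Only the oscillation of $\ell_k$ along the fiber enters, never a lower bound on $\ell_k$. So the fibers over the base poles and the corners need no separate care.

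The same inequality, $\int|\ell'|/\sqrt{\ell}\le 2\sqrt{|\ell'|\,w}$, would already dispose of your ``main obstacle'' on straight segments. The real issue is that it must be run along $\gamma_F$ with the correct $S$. Finally, the diameter is at most twice the meridian length, since two points can be joined through a pole along meridians, so your circle-length bound is unnecessary.
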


Our proof of this result exploits the rich symplectic-topological rigidity near an anticanonical divisor in a Fano manifold. We show that K\"{a}hler-Ricci flow on a Fano manifold decomposes into two purely symplectic processes: symplectic deflation along a (pluri)-anticanonical divisor and an exact symplectic deformation in the divisor complement $X - D$. We view  \Cref{thm:main} result as a natural toric realization of this decomposition. In light of \cite{SongSzekelyhidiWeinkove2013} and others, we discuss how $\psi_t$ is a $C^2$-obstruction to the optimal bound for the decay rate of the fiber under $\omega_t$ and urge a better understanding of its regularity. We present our work as a new approach to the study of K\"{a}hler-Ricci flow.

\begin{acknowledgments}
Thank you to Gábor Székelyhidi and Valentino Tosatti for remarks and resources. We thank John Etnyre, Joshua Jordan, Jie Min, and Jeffery Streets for lending an ear through the initial ambiguity.  Special thanks to Hiro Lee Tanaka for what ended up being radical encouragement.
\end{acknowledgments}

\section{K\"{a}hler Ricci Flow Relative to a Divisor}\label{sec:relative}

%%%%%%%%%%%%%%%%%%%%%%%%%%%%%%%%%%%%%%%%%%
\subsection{Symplectic Deflation Near Plurianticanonical Divisors}
%%%%%%%%%%%%%%%%%%%%%%%%%%%%%%%%%%%%%%%%%%

The cohomological solution
\[[\omega_t] = [\omega_0] - 2\pi t c_1(X)\]
to the K\"{a}hler-Ricci flow
implies that the flow simply deforms the cohomology class of the K\"{a}hler form linearly along the first Chern class. Let $(X,g,\omega)$ be a Fano (or Calabi-Yau) surface. We say a divisor $D \subset X$ is \emph{plurianticanonical} if it is Poincare dual to a positive multiple of the first Chern class $c_1(X)$, write
\[PD[D] = kc_1(X).\]
Equivalently, $D$ finds itself in the same divisor class as the $k$-plurianticanonical bundle $D\in |-K_{X}^{\otimes k}|$. For now, we will assume that this divisor is smooth. The normal bundle 
\[\nu(D) \to \Sigma\] 
provides a symplectic model for a symplectic tubular neighborhood $N_D \subset X$. The Thom class of this bundle may be represented by a 2-form $\omega_{D} \in \Omega_{dR}^2(N_D) \xhookrightarrow{} \Omega_{dR}^2(M)$ compactly supported near the divisor. Consider the family of de Rham forms 
\[\omega_t = \omega - \frac{2\pi t}{k}\omega_{D}\] 
for $t \in [0,\varepsilon]$. This form is symplectic for $\varepsilon > 0$ sufficiently small. The 1-parameter family $(X,\omega_t)$ is a symplectic deformation known as an \emph{symplectic deflation along $D$}.
The class $c_1(-K_{X}^{\otimes k})$ restricts to the Thom class of the divisor $D$ on any tubular neighborhood of $D$. Via the representative $\omega_D$ of the Thom class, the cohomological solution becomes
\[[\omega_t] = [\omega_0] - \frac{2\pi t}{k}[\omega_D].\]
Cohomologically, we see that K\"{a}hler-Ricci flow appears to be simply performing symplectic deflation along $D$. 

\subsection{Volume evolution and finite collapse}

If $\Sigma \subset M$ is a 2-cycle, such as a $J$-holomorphic curve or some other divisor, then along the deformation we have 
\begin{align*}
Vol(\Sigma,\omega_t) &= \langle [\omega_t], [\Sigma] \rangle\\ 
&= \int_{\Sigma} \omega_t\\
&= \int_{\Sigma} \omega  - \frac{2\pi t}{k}\int_{\Sigma} \omega_D\\
&=  Vol(\Sigma, \omega) - \frac{2\pi t}{k} [\Sigma] \cdot [D] 
\end{align*}
so 
\[\frac{d}{d t} Vol(\Sigma,\omega_t) = -\frac{2\pi [\Sigma] \cdot [D]}{k}\]
and thus the change of the area of a 2-cycle $\Sigma$ along the deflation depends on the sign of its intersection with the divisor $D$ and its speed depends on the number of intersection points. This also applies to the divisor $D$ itself
\[\frac{d}{dt} Vol(D,\omega_t) = -\frac{2\pi [D]\cdot [D]}{k}\]
where $[D] \cdot [D]$ is the self-intersection number of the divisor (assumed to be a smooth surface). For this reason, positive divisors induce an upper bound on the existence time of each curve under K\"{a}hler-Ricci flow 
\[T_{\Sigma} = \frac{kVol(\Sigma,\omega)}{2\pi[\Sigma]\cdot[D]}.\]

We let $T_{max}$ be the maximum $\varepsilon$ along which $[\omega_t]$ remains positive along the deflation. Then we may bound this time by the value $T_D$ when $Vol(D,\omega_{T_D}) = 0$ so
\[T_{max} \le T_D = \frac{kVol(D,\omega)}{2\pi[D]\cdot[D]}.\]

Suppose now that $D$ has simple normal crossing singularities. One may still perform deflation just as easily. Suppose that a simple normal crossing (snc)-divisor $D$ decomposes into smooth components as 
\[D = \bigcup_{i=1}^N D_i.\] 
We let $\omega_{i}$ be a compactly supported 2-form representing the the Thom class associated to the normal bundle $\nu(D_i) \to D_i$. Then letting $\omega_D = \sum_{i = 1}^N \omega_i$, we may form $\omega_t = \omega - \frac{2\pi t}{k} \omega_D$ as before. Now we have 
\[\frac{d}{dt} Vol(D_i, \omega_t) = -\frac{2\pi[D_i] \cdot [D]}{k} = -\frac{2\pi(d_i + k_i)}{k}\]
where $d_i$ is the \emph{degree} of the edge of the divisor graph $\Gamma_D$ corresponding to the component $D_i \subset D$ and $k_i = [D_i] \cdot [D_i]$. Depending on the sign of this integer, the volume of a component may increase or decrease. If $d_i + k_i$ is positive, then the component collapses at time
\[T_{D_i} = \frac{kVol(D_i,\omega)}{2\pi(d_i + k_i)}\]
and we may refine our estimate
\[T_{max} \le \min_{i} T_{D_i} \le T_D.\]

\subsection{Deflation plus an exact deformation in the complement}

As we have seen, if $D \subset (X,g,\omega)$ is \emph{smooth} and plurianticanonical, we may choose the 2-form $\omega_D$ representing its Thom class to be supported in a symplectic tubular neighborhood $N_D$ of $D$ modeled on the normal bundle of $D$. Because of the cohomological identity 
\[[\omega_t] = [\omega] - \frac{2\pi t}{k}[\omega_D]\]
then we must have 
\[\int_{\Sigma} \omega_t - (\omega - \frac{2\pi t}{k}\omega_{D}) = 0\]
for every 2-cycle $\Sigma$, and so $\omega_t - (\omega - \frac{2\pi t}{k}\omega_D)$ is an exact 2-form. This allows us to write
\[\omega_t = \omega - \frac{2\pi t}{k}\omega_D + d\beta_t\]
for some 1-form $\beta_t \in \Omega_{dR}^1(X).$ Thus we see that K\"{a}hler-Ricci flow differs from actual symplectic deflation only by the exact form $d\beta_t$.

Since $Supp(\omega_D) \subset N_D$, in the complement $\overline{X} = X - N_D$, we have $\omega_D = 0$ and the deformation becomes 
\[\omega_t = \omega + d\beta_t\]
so K\"{a}hler-Ricci flow is a purely exact symplectic deformation there.

\begin{remark}
    One can generalize this setup to $N_D$ a regular symplectic neighborhood of a divisor with simple normal crossings. These neighborhoods are built from smoothings of plumbings of tubular neighborhoods of their smooth components. See for example \cite{LiMin2021}.
\end{remark}
These straightforward observations allow one to conclude
\begin{theorem}
Let $(X,g,\omega)$ be a Fano variety and let $D \subset X$ be a smooth (or snc) $k$-plurianticanonical divisor. Then under K\"{a}hler-Ricci flow $(X,g_t,\omega_t)$ decomposes into two simultaneous processes:
\[\omega_t = \omega^{Deflation}_t + \omega_t^{Exact}\]
\begin{enumerate}
\item \textbf{Deflation} along the symplectic divisor $D$ supported in a tubular neighborhood $N_D \subset X$ with  
\[\omega^{Deflation}_t = \omega - \frac{2\pi t}{k}\omega_D\] 
which is symplectic for short $t$
and\\
\item a global \textbf{exact symplectic deformation}
\[\omega^{Exact}_t = d\beta_t.\]
\end{enumerate}
\end{theorem}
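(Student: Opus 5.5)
The plan is to assemble the statement from the cohomological computation already carried out, so the argument is essentially de Rham bookkeeping plus one openness check.

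\textbf{Step 1: the deflation form represents the right class.} First I fix the representative $\omega_D$. For $D$ smooth, take the Thom form of $\nu(D)\to D$, pulled back to a tubular neighborhood $N_D$ via the symplectic neighborhood theorem. For $D$ snc, take $\omega_D=\sum_i\omega_i$, built from the plumbing description in the Remark above. In either case $\omega_D$ is closed, compactly supported in $N_D$, and Poincar\'e dual to $[D]$. By plurianticanonicity its class in $H^2_{dR}(X)$ equals $k\,c_1(X)$. Hence
\[
[\omega^{Deflation}_t]=[\omega]-\tfrac{2\pi t}{k}[\omega_D]=[\omega]-2\pi t\,c_1(X).
\]

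\textbf{Step 2: the exact remainder.} By the cohomological solution \eqref{eqn:cohomology}, the class $[\omega]-2\pi t\,c_1(X)$ is also $[\omega_t]$. So $\omega_t-\omega^{Deflation}_t$ is a closed form with vanishing class, hence exact. To choose $\beta_t$ smoothly in $t$, I would use Hodge theory for a fixed background metric, say $g_0$. Set
\[
\beta_t=d^*G\bigl(\omega_t-\omega^{Deflation}_t\bigr),
\]
where $G$ is the Green operator. Since the difference is closed and has zero harmonic part, $d\beta_t$ recovers it. Smooth dependence of the flow on $t$ then gives a smooth family $\beta_t$ with $\beta_0=0$. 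Define $\omega^{Exact}_t=d\beta_t$.

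\textbf{Step 3: the deflation form is symplectic for short time.} Nondegeneracy is an open condition, $X$ is compact, and $\omega_D$ is bounded in $C^0$. Therefore $\omega-\tfrac{2\pi t}{k}\omega_D$ stays nondegenerate for $0\le t<\varepsilon$, with
\[
\varepsilon \sim \Bigl(\tfrac{2\pi}{k}\,\|\omega_D\|_{C^0}\,\|\omega^{-1}\|_{C^0}\Bigr)^{-1}.
\]
Outside $N_D$, $\omega^{Deflation}_t=\omega$. There $\omega_t=\omega+d\beta_t$ is symplectic because it is a K\"ahler form, which gives the exact symplectic deformation on the complement.

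\textbf{Main obstacle.} I expect the snc case to be the only real subtlety. There one must check that the plumbed neighborhood admits a closed compactly supported representative of $PD[D]$ whose restriction to each component has the correct pairing, i.e.\ $\int_{D_i}\omega_D=[D_i]\cdot[D]$. I would handle this by taking the sum of the component Thom forms and verifying the pairing with $H_2(X)$ via intersection numbers, citing \cite{LiMin2021} for the existence of the regular neighborhood. The smooth case needs nothing beyond Steps 1--3.
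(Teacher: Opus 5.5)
Your proposal is correct and follows the paper's argument. You represent $PD[D]=k\,c_1(X)$ by a compactly supported Thom form, observe that $\omega_t-\bigl(\omega-\tfrac{2\pi t}{k}\omega_D\bigr)$ has vanishing class and so equals $d\beta_t$, and note that off $N_D$ this reduces to $\omega_t=\omega+d\beta_t$. Your Hodge-theoretic choice $\beta_t=d^*G(\cdot)$, which gives smooth dependence on $t$, and your openness argument for short-time nondegeneracy are refinements the paper leaves implicit; the snc case is not a real obstacle, since $\sum_i\omega_i$ represents $\sum_i PD[D_i]=PD[D]$ by linearity alone.
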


This decomposition presents a global organizing framework for K\"{a}hler-Ricci flow in the Fano setting. This perspective can be further refined in case of toric surfaces and its consideration there will lead to our proof of \Cref{thm:main}.

\section{Toric Varieties}
We now specialize to a smooth proper toric surface $(X,g,\omega)$. Such a manifold admits a \emph{toric moment map} $\mu \colon X \to \RR^2$. The image of the moment map is a closed convex polytope $P \subset \RR^2$, its \emph{Delzant polytope} or \emph{moment polytope} \cite{Abreu2003,CLS2011}. The preimage of the interior of the polytope $U = \mu^{-1}(P-\partial P)$ is an open dense submanifold with a free $\TT^2$-action. The fiber $T_y = \mu^{-1}(y)$ over an interior point $y \in P$ is a torus which is invariant under the Hamiltonian $\TT^2$-action.

\subsection{Convex Toric Potentials}
A convex polytope can be described via a collection of linear functions $\{\ell_k\}_{k=1}^N$ defining its faces: Each face $F_k \subset \partial P$ of the polytope lies on a unique line which has a unique outward-pointing primitive normal vector $\vec{n}_k$. The Delzant polytope $P$ may be presented as an intersection of the half spaces defined by inequalities (c.f. \cite{Guillemin1994})
\begin{align*}
\langle y, \vec{n}_1 \rangle &\le \lambda_1\\
\langle y, \vec{n}_2 \rangle &\le \lambda_2\\
&\vdots\\
\langle y, \vec{n}_N \rangle &\le \lambda_N
\end{align*}
with $\lambda_k \in \RR$. We may then define the functions
\[\ell_k(y) = \lambda_k - \langle y, \vec{n}_k \rangle\]
so that a point $y$ is in the interior of $P$ if and only if $\ell_k(y) > 0$ for all $k$.

Following the framework of Abreu and Guillemin \cite{Abreu1998,Guillemin1994}, each K\"{a}hler potential $\phi \colon X \to \RR$ corresponds under the Lengendre transform to a convex function $u \colon U \to \RR$ over the interior of the moment polytope $P$. In toric action-angle coordinates $(y_1,y_2,\theta_1,\theta_2)$, there is a canonical choice of $u$ (c.f. \cite{Abreu1998,Guillemin1994})
\begin{align}\label{eqn:guillemin}
    u(y) = \frac{1}{2}\sum_{k=1}^N \ell_k(y)\log(\ell_k(y)).
\end{align}
By setting $x_i = \frac{\partial u}{\partial y_i}$, we may obtain a K\"{a}hler potential $\phi$ for a K\"{a}hler form $\omega$ in $(x_1,x_2)$-coordinates via Legendre transform \cite[Section 2.2]{Abreu2003}. We may exponentiate these to complex coordinates $z_i = e^{x_i + i\theta_i}$ for the interior $X - D$. 
We also have \cite[Exercise 3.5]{Abreu2003} \cite[Chapter 8]{CLS2011}
\[\frac{\partial \phi}{\partial x_i} = y_i.\]

\[[\omega] = 2\pi\sum_{k=1}^N \lambda_k[\omega_{D_k}]\]
and 
\[c_1(X) = \sum_{k=1}^N [\omega_{D_k}].\]
A quick computation shows

\begin{proposition}
    We have 
    \[du = \frac{1}{2}\sum_{k=1}^N [1 + \log(\ell_k(y))]d\ell_k\]
    \[\nabla^2 u = \frac{1}{2}\sum_{k=1}^N\frac{d\ell_k\otimes d \ell_k}{\ell_k(y)}\]
    for $u, \ell_k$ defined above.
\end{proposition}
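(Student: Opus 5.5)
The plan is a direct computation from \eqref{eqn:guillemin}, using that each $\ell_k$ is affine in $y$. Concretely, $\ell_k(y) = \lambda_k - \langle y, \vec{n}_k\rangle$ has constant differential $d\ell_k = -\sum_i n_{k,i}\,dy_i$ and vanishing Hessian, $\nabla^2 \ell_k = 0$. The task therefore reduces to differentiating the one-variable function $f(s) = s\log s$ composed with $\ell_k$. We then sum over $k$ and multiply by $\frac{1}{2}$.

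\textbf{First derivative.} Apply the chain rule on the interior $U$, where every $\ell_k > 0$ so that $\log \ell_k$ is smooth. Since $f'(s) = 1 + \log s$, this gives
\[
d\big(\ell_k\log\ell_k\big) = \big(1+\log\ell_k(y)\big)\,d\ell_k .
\]
Summing over $k$ and multiplying by $\frac{1}{2}$ yields the stated formula for $du$. In coordinates this reads $\partial_{y_i} u = -\frac{1}{2}\sum_k n_{k,i}\,(1+\log\ell_k)$.

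\textbf{Second derivative.} Differentiate once more. The coefficient $1+\log\ell_k$ has differential $d\ell_k/\ell_k$, while $d\ell_k$ itself is constant, so no term involving $\nabla^2\ell_k$ appears. Equivalently, $f''(s) = 1/s$, and for $g = f\circ\ell$ with $\ell$ affine one has $\nabla^2 g = f''(\ell)\, d\ell\otimes d\ell$. This gives
\[
\nabla^2\big(\ell_k\log\ell_k\big) = \frac{d\ell_k\otimes d\ell_k}{\ell_k(y)}.
\]
In coordinates the entries are $\frac{n_{k,i}n_{k,j}}{\ell_k}$; the two minus signs cancel, consistent with the tensor expression. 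Summing and halving gives the claimed Hessian.

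There is no real obstacle here. The only points of care are the following:
\begin{itemize}
\item the computation is valid only on $U = P - \partial P$, where $\ell_k > 0$;
\item one must keep track of the sign convention in $d\ell_k = -\langle \cdot, \vec n_k\rangle$, which drops out of the tensor square;
\item one should note that the Hessian is positive semidefinite term by term, and in fact positive definite because the normals $\vec n_k$ span $\RR^2$.
\end{itemize}
The last point confirms the convexity of $u$ assumed in the Abreu--Guillemin framework.
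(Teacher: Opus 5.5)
Your proposal is correct and matches the paper, which gives no written proof beyond calling the formulas "a quick computation." Your chain-rule argument with $f(s)=s\log s$ on the interior where every $\ell_k>0$, using $f''(s)=1/s$ and the fact that each $d\ell_k$ is constant, is exactly that computation written out.
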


\subsection{Toric Ruled Surfaces}

Let $f \colon X \to \Sigma$ be a holomorphic fibration. We say such a fibration is \emph{compatible} with a toric structure if its restriction to each fiber is toric. For toric surfaces, this implies that each fiber is a toric Riemann surface which then implies that it is diffeomorphic to $S^2$. Thus $f \colon X \to \Sigma$ is actually a $\PP^1$-bundle over $\Sigma$. This is sometimes referred to as a $T^2$-invariant ruling on $X$. We shall refer to $X$ as a \emph{toric ruled surface}.

\begin{example}
    Consider $X = \PP^1 \times \PP^1$ with standard toric moment map $\mu \colon X \to \RR^2$ given by the product of the standard moment map for $\PP^1$. The image of $\mu$ is a square $P = [0,1]^2 \subset \RR^2$. The holomorphic submersion $f \colon X \to \PP^1$ given by $(p_1,p_2) \mapsto p_1$ is compatible with the toric structure.
\end{example}

\begin{example}Take $\CP^2$ together with an embedded projective line $L \subset \CP^2$. We may fit this line to a Lefschetz pencil on $\CP^2$ whose base locus is a single point $p$. Blowing up this point $p$, we obtain a Lefschetz fibration $F \colon X \to \PP^1$. The toric structure on $\CP^2$ may be used to construct a toric structure on $X$. Via a transformation of $\CP^2$, we can ensure that our Lefschetz fibration is also compatible with this toric structure.
\end{example}

\begin{example}
    More generally, we consider a Hirzebruch surface $X_{b-a} = \mathcal{\PP}(\mathcal{O}(a)\oplus \mathcal{O}(b))$. This carries an effective $\mathbb{T}^2$-action which descends from the effective $\TT^2$-action on the rank-2 bundle $\mathcal{O}(a)\oplus \mathcal{O}(b)$.
\end{example}

Possessing a compatible Hamiltonian $S^1$ action on fibers leads to remarkable rigidity. For each fixed fiber, such an action represents a subgroup $S^1 \xhookrightarrow{} \TT^2$, a periodic 1-parameter family of elements. Such a family is characterized by its slope at the identity. This is represented by a vector $v \in \mathfrak{t}$ coming from some parameterization of $S^1$. In the standard parameterizaiton of $\TT^2$, this is a vector of some fixed rational slope. Such a holomorphic submersion thus represents a continuous family of rational slopes which only exists if the family is locally constant. It then follows that such an action may be represented by a non-vanishing assigment of parallel representing vectors $\sigma \colon \Sigma \to \mathfrak{t}$. By definition, each of these generates a vector field $X_{\sigma(q)}$ which is tangent to its corresponding fiber $f^{-1}(q)$. From such a vector, one can choose a single vector $v \in \mathfrak{t}$ which generates a vector field $X_v$ tangent to all fibers simultaneously (in fact every vector in the image of $\sigma$ satisfies this). This yields a potential condition to check for establishing compatibility for more specific examples \cite[Chapters 3 and 7]{CLS2011}.
\begin{proposition}
    A submersion $f \colon X \to \Sigma$ is compatible with the toric structure on $X$ if and only if there is a primitive vector $v \in \mathfrak{t}$ such that, for each fiber $F$, $X_v \in TF$ everywhere.
\end{proposition}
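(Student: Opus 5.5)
We prove the two directions separately, using the local-constancy argument sketched just before the statement as the engine for the forward implication.

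\emph{($\Rightarrow$).} Suppose $f \colon X \to \Sigma$ is compatible, so the $\TT^2$-action preserves fibers and restricts to each fiber $F_q = f^{-1}(q)$ as a toric action on $F_q \cong \PP^1$. Since $\dim F_q = 2$ and the action on $F_q$ is effective toric, the stabilizer of the fiber's tangent directions is governed by a one-dimensional subtorus $S^1_q \subset \TT^2$ acting effectively on $F_q$.

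For $w \in \mathfrak{t}$, the vector $X_w$ is tangent to $F_q$ at all points exactly when $df(X_w) = 0$ along $F_q$. Define $K_q = \{ w \in \mathfrak{t} : df(X_w)|_{F_q} \equiv 0 \}$, a linear subspace.

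1. \textbf{$K_q$ is a rational line.} Since the action on $F_q$ is nontrivial, $K_q \neq 0$. Since the torus orbits in $U$ are 2-dimensional, $\mathfrak{t}$ cannot be entirely tangent to a curve, so $K_q \neq \mathfrak{t}$. Hence $\dim K_q = 1$. Moreover $K_q$ is the Lie algebra of the closed subgroup of $\TT^2$ fixing every fiber through $F_q$ setwise. This subgroup is a subtorus, so $K_q$ is spanned by a primitive integer vector $v_q$, unique up to sign.

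2. \textbf{$v_q$ is locally constant.} Because $f$ is a holomorphic submersion and the action is smooth, the condition $df(X_w)=0$ varies continuously in $q$. So $q \mapsto K_q$ is a continuous map $\Sigma \to \mathbb{P}(\mathfrak{t})$. Its image lies in the countable set of rational slopes, so it is locally constant. As $\Sigma$ is connected, it is constant.

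Set $v = v_q$ for any $q$. Then $X_v \in TF$ everywhere, which proves this direction.

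\emph{($\Leftarrow$).} Suppose a primitive $v$ exists with $X_v \in TF$ for all fibers. We show each fiber is $\TT^2$-invariant and carries an induced toric action.

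1. \textbf{Fibers are $\TT^2$-invariant.} Pick $w$ completing $v$ to a basis of $\mathfrak{t}$. Since $\TT^2$ is abelian, $X_w$ commutes with $X_v$. So the flow of $X_w$ maps integral curves of $X_v$, which lie in fibers, to integral curves of $X_v$. On the dense open set $U$ the $X_v$-orbits are circles spanning, together with the Hamiltonian direction, the fiber. I would argue instead via the map $f$ directly: $f \circ \exp(sX_w)$ is constant on $X_v$-orbits. The $X_v$-orbits are dense in a generic fiber once combined with the $\mathbb{C}^*$-complexification, since $f$ is holomorphic and $JX_v$ is also tangent to $F$. Hence $\exp(sX_w)$ maps fibers to fibers. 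It therefore induces an action on $\Sigma$ by holomorphic maps isotopic to the identity.

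2. \textbf{The induced action on $\Sigma$ permutes fibers compatibly.} If this induced action on $\Sigma$ is trivial, each fiber is invariant. Then $S^1_v = \exp(\RR v)$ acts effectively and Hamiltonianly on $F$, since $v$ is primitive and $X_v \neq 0$ generically. So $F$ is toric.

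If the induced action is nontrivial, compatibility should be read as fiber-preserving up to the base action, i.e.\ $f$ is equivariant. Then the restriction of the $S^1_v$-factor to each fiber is the toric structure.

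\textbf{Main obstacle.} The main obstacle is the converse direction. One must pin down exactly what ``compatible'' requires: invariance of each fiber versus equivariance of $f$. One must also justify that tangency of $X_v$ alone forces the remaining torus direction to respect the fibration. I would resolve this by using holomorphicity, which makes $JX_v$ tangent to fibers too, together with commutativity of the torus.
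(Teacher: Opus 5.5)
Your $(\Rightarrow)$ direction is the paper's argument (a continuous choice of rational slope must be constant), but Step~1 is false as stated. A fiber lying in the toric divisor $D$ is $\TT^2$-invariant, so every $X_w$ is tangent to it and $K_q=\mathfrak{t}$ there; examples are $\{0\}\times\PP^1$ and $\{\infty\}\times\PP^1$ in $\PP^1\times\PP^1$. Your reason for $K_q\neq\mathfrak{t}$ only works for fibers meeting $U$. So $q\mapsto K_q$ is not a map $\Sigma\to\PP(\mathfrak{t})$. The continuity you assert is also not automatic: it comes from closedness of $\{(q,[w]) : df(X_w)|_{F_q}\equiv 0\}$ plus compactness of $\PP(\mathfrak{t})$, and that pins down $K_q$ only where $\dim K_q=1$. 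The repair is short. Run the argument on $\Sigma^\circ=f(U)$, which is open (submersions are open maps), dense, and connected (since $U\cong(\mathbb{C}^*)^2$ is). This gives one primitive $v$ spanning $K_q$ for all $q\in\Sigma^\circ$, and $df(X_v)=0$ then extends from the dense set $f^{-1}(\Sigma^\circ)\supset U$ to all of $X$ by continuity. You should also delete the opening claim that the $\TT^2$-action preserves fibers. It is false (in $\PP^1\times\PP^1$ one circle factor rotates the base), it contradicts your own Step~1, and it is not what compatibility means.

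For $(\Leftarrow)$ you have misplaced the difficulty; this is the easy direction. With the paper's definition (the restriction to each fiber is toric, i.e.\ some circle in $\TT^2$ acts on each fiber), the other torus directions play no role. Since $\frac{d}{d\tau}f(\exp(\tau v)\cdot p)=df(X_v)=0$, the circle $S^1_v=\exp(\RR v)$ preserves every fiber. It acts there Hamiltonianly with moment map $\langle\mu,v\rangle|_F$, because $X_v$ is tangent to $F$ and $\omega|_F$ is symplectic. It acts effectively on fibers meeting $U$, because $\TT^2$ acts freely on $U$ and $v$ is primitive. That is the whole converse. Your Step~1 there is headed by the same false claim, since fibers are not $\TT^2$-invariant. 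What its body proves, that the flow of $X_w$ permutes fibers, is true but irrelevant. The ``trivial induced action'' case of Step~2 is vacuous, since a free $\TT^2$-orbit in $U$ is an embedded $2$-torus and cannot lie in a fiber $F\cong S^2$. The conclusion of your second case is right, but it should rest on the one-line tangency argument rather than on reinterpreting compatibility as equivariance of $f$.
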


\subsection{A shrinking family of toric metrics}
From the presentation of the moment polytope $P$ via the collection of inequalities 
\[\langle y, n_k\rangle  \le \lambda_k\]
we have a canonical toric divisor: Let $P_k \subset P$ denote the face of $P$ defined by $\ell_k = 0$. Then $D_k = \mu^{-1}(P_k)$ is a smooth sphere and 
\[D = \bigcup_{k=1}^N D_k\]
is a normal crossing divisor. The cohomology class of this divisor depends on $n_k$ but not on $\lambda_k$. We consider the family $P_t$ of polytopes defined by 
\[\langle y, n_k\rangle  \le \lambda_k -t\]
for $t \in [0,T]$. This canonically defines a 1-parameter family of toric K\"{a}hler metrics $\widetilde{\omega}_t$ via the Guillemin potentials
\[\widetilde{u}_t = \frac{1}{2}\sum_{k=1}^N\ell_{k,t}(y)\log(\ell_{k,t}(y))\]
Note that since $\widetilde{\omega}_0$ and $\omega$ share a moment polytope, we have $[\omega] = [\widetilde{\omega}]$ by \cite{Abreu1998}.

We may present $[\widetilde{\omega}_t]$ and $c_1(X)$ via the toric divisors \cite{Guillemin1994} \cite[Exercise 3.5]{Abreu2003} \cite[Chapter 8]{CLS2011}, we have 
\[[\widetilde{\omega}_t] = 2\pi\sum_{k=1}^N (\lambda_k - t)[\omega_{D_k}]\]
and 
\[c_1(X) = \sum_{k=1}^N [\omega_{D_k}]\]
after choosing compactly supported 2-forms $\omega_{D_k}$ as before. This allows us to write
\[[\widetilde{\omega}_t] = [\omega] - 2\pi t c_1(X).\]
Thus our family follows the cohomological profile of a K\"{a}hler-Ricci flow starting at $\omega$.

\subsection{The geometry of the fiber}
The moment map $\mu \colon X \to P$ restricts to a fiber $F$ to yield a map 
$\mu|_{F}\ \colon F \to P$. Its image is a curve $\gamma_F \subset P$ with endpoints on $\partial P$. Let $v \in \mathfrak{t}$ be a vector generating a compatible Hamiltonian $S^1$-action on $F$. We can extend $v$ to an integral basis $\{v,w\}$ for the Lie algebra $\mathfrak{t}$ which we may dualize to obtain a pair of moment coordinates $(y_1,y_2)$ for $\mathfrak{t}^*$. This gives rise to a pair of toric coordinates $(z_1,z_2)$ in $X - D$.

The moment map $\mu_F \colon F \to \RR$ for its compatible Hamiltonian $S^1$-action is $\mu_F = \mu_1$. Its image is the interval $I_F \subset \RR$ given by $I_F = [s_-, s_+]$. Using this to fix action angle coordinates $(s,\varphi)$, we may write the fiber metric $g_F = g|_F$ in the form
\[g_F = S\ ds^2 + S^{-1}\ d\varphi^2\]
where $S = \Delta u_F$ for some convex toric potential $u_F \colon I_F \to \RR$ (c.f. \cite{Abreu1998} \cite[Section 2.2]{Abreu2003}).

We will compute $\Delta u_F$ from $\nabla^2 u$. Recall that for the Guillemin potential, we have
\[\nabla^2 u = \frac{1}{2}\sum_{k=1}^N\frac{d\ell_k\otimes d \ell_k}{\ell_k(y)}\]
We may use the moment coordinate $s$ to form a parameterization $\gamma_F(s) = (s,r(s))$ of the $\mu$-moment image of $F$. It follows that
\begin{align}\label{eqn:onshell}\nabla^2 u|_F = \frac{1}{2}\sum_{k=1}^N\frac{d\ell_k(\dot{\gamma}_F(s))^2}{\ell_k(\gamma_F(s))}.
\end{align}
Each $\ell_k$ is of the form
\[\ell_k(y) = \lambda_k - \langle n_k, y\rangle\]
for some primitive integral vector $n_k$ and so it may be written as
\[\ell_k(y_1,y_2) = \lambda_k + a_ky_1 + b_ky_2\]
where $a_k$ and $b_k$ are integers.

By letting $f_k(s) = \ell_k(\gamma_F(s))$, we may write \Cref{eqn:onshell} more compactly as 
\[\nabla^2 u|_F = \frac{1}{2}\sum_{k=1}^N\frac{(f'_k)^2}{f_k}\]
Since the fiber is given on the open torus orbit by $z_2=c$, via the fact that $z_j = e^{x_j + i\theta_j}$ we have
\[
\log|c| = x_2=\frac{\partial u}{\partial y_2}
\]
along $F$.  We therefore obtain
\[\log|c| = \frac{\partial u}{\partial y_2} = \frac{1}{2} \sum_{k=1}^N b_{k}(1 + \log \ell_k)\]
which exponentiates to 
\[\prod_{k=1}^N \ell_k^{b_k} = |c|^2e^{-\sum_{k=1}^N b_k}.\]
We let the right hand side be denoted by a constant $C = C(b_k,F) = |c|^2e^{-\sum_{k=1}^N b_k}$
and if we let $I_+ = \{k : b_k \ge 0\}$ and $I_- =  \{k : b_k < 0\}$, then
\[\prod_{k=1}^N \ell_k^{b_k} - C= 0\]
is equivalent to
\begin{align}\label{eqn:polynomial}
    \prod_{k_+ \in I_+} \ell_k^{b_{k_+}}- C\prod_{k_- \in I_-} \ell_k^{-b_{k_-}} = 0.
\end{align}
The left hand side of \Cref{eqn:polynomial} is \emph{polynomial} in $(y_1,y_2)$ and the moment curve $\gamma_F\subset P$ is the vanishing locus of this polynomial intersected with $P$. This fact will be used in the proof of \Cref{thm:main}.

\section{Kahler Ricci Flow on Toric Ruled Surfaces}
Let $(X,g,\omega)$ be a toric ruled surface. The fact that the fibers of such a manifold are Fano indicates that their volume should naturally shrink under the K\"{a}hler-Ricci flow. If the volume of the fiber collapses in finite time, then the cohomology class of the metric often collapses onto the class of a K\"{a}hler metric pulled back from the base. In this case, we shall refer to the fiber as collapsing. One can obtain a coarse understanding of collapsing behavior by analyzing volume.

In order to complete a proof of \Cref{thm:main}, we relate the diameter of the fiber to the volume of the fiber and show that for our canonical shrinking family of toric metrics, the control of diameter decay arises exactly from the collapse rate of the volume. This follows from a clear understanding of the toric fiber metric.

\subsection{Collapsing and non-collapsing}
We let 
\[\omega_{\Sigma} = \frac{f_*(\omega\wedge \omega)}{Vol(F,\omega)}\]
where $f_* \colon \Omega_{dR}^4(X) \to \Omega_{dR}^2(\Sigma)$ denotes fiber integration. We have
\[Vol(X,\omega) = \int_{X}\omega \wedge \omega = \int_{\Sigma}f_*(\omega\wedge \omega)\]
and so $Vol(X,\omega) = Vol(F,\omega)Vol(\Sigma,\omega_{\Sigma}).$

Under K\"{a}hler-Ricci flow, we have 
\begin{align*}
Vol(X,\omega_t) &= \int_{X} \omega_t \wedge \omega_t\\ 
&= \int_{X} (\omega - 2\pi t c_1(X)) \wedge (\omega - 2\pi t c_1(X))\\
&= \int_{X} \omega \wedge \omega - 4\pi t\int_{X} c_1(X) \wedge \omega + 4\pi^2 t^2 \int_{X} c_1(X) \wedge c_1(X) \\
&= Vol(X,\omega) - 4\pi t Vol(D,\omega) + 4\pi^2 t^2[D]\cdot[D]
\end{align*}
and of course
\begin{align*}
Vol(F,\omega_t) =  Vol(F,\omega) - 2\pi t[F]\cdot [D]
\end{align*}
A basic understanding of collapsing behavior of $X$ under K\"{a}hler-Ricci flow comes first viewing $Vol(X,\omega_t)$ as a quadratic in $t$ and understanding its discriminant. If 
\[Vol(X,\omega) > \frac{Vol(D,\omega)^2}{[D]\cdot[D]}\]
then total volume remains non-collapsed for all $t < T$ which means so does the volume of $F$. K\"{a}hler-Ricci flow thus continues until some $t = T$ with 
\[T \le T_F\]
and 
\[T \le T_D.\]

If 
\[Vol(X,\omega) \le \frac{Vol(D,\omega)^2}{[D]\cdot[D]}\]
then $X$ may collapse in several ways. We let $t = T_X$ denote the first positive root of $Vol(X,\omega_t)$. If strict inequality holds, then we have $T_X < T_D$ and so the volume of $X$ collapses before the volume of $D$ does. If $T_X < T_F$, then we have 
\[\lim_{t \to T_X} Vol(\Sigma, \omega_{\Sigma}(t)) = 0\]
while the volume of the fiber remains positive. If $T_X = T_F$, then by L'H\^opital's rule
\[\lim_{t \to T_X}Vol(\Sigma,\omega_\Sigma(t)) = \lim_{t \to T_X} \frac{Vol(X,\omega_t)}{Vol(F,\omega_t)} = \lim_{t \to T_X} \frac{\partial_t Vol(X,\omega_t)}{\partial_t Vol(F,\omega_t)} = \lim_{t \to T_{X}} \frac{2Vol(D,\omega_t)}{[F]\cdot[D]}\]
and so $(\Sigma, \omega_{\Sigma}(t))$ remains non-collapsed unless $T_X = T_F = T_D$. 

We will assume that the maximal existence time satisfies 
\[T = T_F < T_D\]
so that the K\"{a}hler-Ricci flow exists until volume of the fiber collapses and 
\[\lim_{t \to T_{X}} Vol(\Sigma,\omega_t) > 0.\]
this is exactly the same setting as \cite{SongSzekelyhidiWeinkove2013} where it is required that 
\[[\omega] - 2\pi Tc_1(X) = [f^*\Omega]\] 
for some K\"{a}hler metric $\Omega$ on $\Sigma$.

\subsection{Diameter decay under K\"{a}hler-Ricci flow}\label{sec:diameter}
We are now ready to prove our main theorem. The diameter of $F$ is defined to be the supremum over all pairs of points $p,q \in F$ of $g_F$-distances $dist_{g_F}(p,q)$. Let $p,q$ be a pair of points realizing the diameter. We may always join any two points in $F$ by segments of a pair of meridians through the poles of $F$. This allows us to approximate the distance between $p$ and $q$ with curve with piecewise constant angle $\varphi$ in action angle coordinates $(s,\varphi)$. We thus may estimate the diameter to be
\[diam_{g_F}(F) \le 2L[F],\]
where $L[F]$ is the meridional length of $F$ defined to be the length of any fixed meridian $m = \{\varphi = \varphi_0\}$ between the poles $p_- = \{s = s_{-}\}$ and $p_+ = \{s = s_+\}$. We have
\[L[F] = Length[m] = \int_{I_F} \sqrt{S}\ ds.\]

From our formula for $S$ above, 
\[S = \frac{1}{2}\sum_{k=1}^N\frac{(f'_k)^2}{f_k}\]
this becomes the quantity
\[L[F] \le \frac{1}{\sqrt{2}}\sum_{k=1}^N\int_{I_F}\frac{|f'_k|}{\sqrt{f_k}}\ ds.\]
Each integral on the right may be computed via the Jacobian formula which yields
\[\int_{I_F}\frac{|f'_k|}{\sqrt{f_k}}\ ds = \int_{\RR} \sum_{s \in f^{-1}_k(t)}\frac{1}{\sqrt{t}}\ dt = \int_{f_{k,\min}}^{f_{k,\max}} \frac{N_k(t)}{\sqrt{t}}\ dt\]
where 
\[f_{k,\min} = \min_{s} f_k(s),\]
\[f_{k,\max} = \max_{s} f_k(s),\]
and
\[N_k(t) = \#f^{-1}_k(t) = \# (\gamma_F \cap \{\ell_k = t\}).\]
By \Cref{eqn:polynomial} and the fact that $\ell_k$ is also polynomial, the right hand side is finite and uniformly bounded above by Bezout's theorem. We denote the upper bound by $K$. Note that this bound depends only on the fiber $F$ and the integers $b_k$.

It is straightforward to show $|f'_k|$ is bounded above.
Indeed, by the parameterization of $\gamma_F(s)$, 
\[f_k'(s) = a_k + b_kr'(s).\]
The vector field $\dot \gamma_F(s) = (1, r'(s))$ is tangent to the level set $F = \{x_2 = \log|c|\}$ and so $\dot \gamma_F \in \ker(dx_2) = TF$ and so
\[0 = dx_2(\dot \gamma_F) = d \left(\frac{\partial u}{\partial y_2}\right) (\dot\gamma_F) = u_{12} + u_{22}r'(s)\]
which yields
\[r'(s) = -\frac{u_{12}}{u_{22}}.\]
Since
\[u_{12} = \frac{1}{2}\sum_{k=1}^N \frac{a_kb_k}{\ell_k}\]
\[u_{22} = \frac{1}{2}\sum_{k=1}^N \frac{b_k^2}{\ell_k}\]
we have 
\[r'(s) = -\frac{\sum_{k=1}^N \frac{a_kb_k}{\ell_k}}{\sum_{k=1}^N \frac{b_k^2}{\ell_k}}\]
and since 
\[\sum_{k=1}^N \frac{|a_kb_k|}{\ell_k}\le \left(\max_{k, b_k \neq 0} \left|\frac{a_k}{b_k}\right|\right)\sum_{k=1}^N \frac{b_k^2}{\ell_k}\]
we obtain upper bounds 
\[|r'(s)| \le \max_{k, b_k \neq 0} \left|\frac{a_k}{b_k}\right|\]
\[|f'_k(s)| \le |a_k| + |b_k|\max_{k, b_k \neq 0}\left|\frac{a_k}{b_k}\right|. \]
Thus, there exists a constant $C = C(F,a_1,b_1,\ldots, a_k,b_k)$ depending only on $F$ and the fan of $\{\ell_k\}_{k=1}^N$ and uniform in $k$ such that $|f_k'| \le C$. It then follows that 
\[\sqrt{f_{k,\max} - f_{k,\min}} \le C\sqrt{s_{+} - s_{-}}.\]

We have an upper bound
\begin{align*}
    L[F] &\le \frac{1}{\sqrt{2}}\sum_{k=1}^n\int_{f_{k,\min}}^{f_{k,\max}} \frac{K}{\sqrt{t}}\ dt\\ &= \sqrt{2}K\sum_{i=1}^N(\sqrt{f_{k,\max}} - \sqrt{f_{k,\min}})\\ &\le \sqrt{2}K\sum_{k=1}^N\sqrt{f_{k,\max} - f_{k,\min}}\\ &\le \sqrt{2}NKC \sqrt{(s_+ - s_{-})}
\end{align*}
Since 
\[Vol(F,\omega) = \int_{F} ds \wedge d\varphi = 2\pi(s_+ - s_-)\] 
this yields the estimate
\[diam_{g_F}(F) \le C\sqrt{Vol(F,\omega)}.\]
for some constant $C = C(F, a_k,b_k)$ depending only on the fan of $\{\ell_k\}_{k=1}^N$ and $F$. From this inequality follows our main theorem.

\begin{proof}[\normalfont \textbf{Theorem~\ref{thm:main}}]
\renewcommand{\qedsymbol}{}
\em
    Let $\widetilde{\omega}_t$ be the canonical shrinking family of toric metrics starting at $(X,g,\omega)$. Then we have
    \[diam_{\widetilde{\omega}_t}(F) \le C(T - t)^{1/2}\]
    where $C$ is a constant depending only on the fiber $F$ and the fan of the linear functions $\{\ell_k\}_{k=1}^N$.
\end{proof}
\begin{proof}
    Along the family, we have 
    \[diam_{\widetilde{\omega}_{t}}(F) \le C\sqrt{Vol(F,\widetilde{\omega}_t)}\]
    Since $\widetilde{\omega}_t$ and $\omega_t$ are cohomologous,
    \[Vol(F,\widetilde{\omega}_t) = Vol(F,\omega_t) = Vol(F,\omega) - 2\pi t[F]\cdot[D] = 2\pi[F]\cdot[D]\left(\frac{Vol(F,\omega)}{2\pi[F]\cdot[D]} - t\right)\]
    and since
    \[T = T_{F} = \frac{Vol(F,\omega)}{2\pi[F]\cdot[D]}\]
    we have 
    \[Vol(F,\widetilde{\omega}_t) = 2\pi[F]\cdot[D]\left(T - t\right).\]
    We may then estimate
    \[diam_{\widetilde{\omega}_t}(F) \le C\sqrt{2\pi[F]\cdot[D]}(T - t)^{1/2}\]

    We replace $C$ in the theorem statement with the constant on the right which depends only on $F$ and the fan of $\{\ell_k\}_{k=1}^N$.
\end{proof}

\section{Discussion}
We now have seen that every toric variety $(X,g,\omega)$ admits a canonical shrinking family of toric metrics $\widetilde{\omega_{t}}$ which follow the cohomological profile of a K\"{a}hler-Ricci flow starting at $\omega$. Analytically, we have
\[\omega_t = i \partial \overline{\partial} \phi_t = \widetilde{\omega}_t + i\partial \overline{\partial}\psi_t\]
With $\psi_t$ satisfying the classic parabolic Monge-Ampere evolution equation under the flow. This is purely a manifestation of the framework of Guillemin and Abreau which ultimately shows that any two toric metrics are $i\partial \overline{\partial}$-exact. The key estimate in \Cref{sec:diameter} relies on the understanding and control of the toric fiber metrics $g_F(t)$ for the family $\widetilde{\omega}_t$ which relies on the specific toric potential $u$ chosen from \Cref{eqn:guillemin}. By \cite{Abreu1998} \cite[Theorem 2.8]{Abreu2003}, any toric potential $u_{\omega}$ for our original toric metric $\omega$ differs from $u$ by a single smooth function $f \colon P \to \RR$ 
\[u_\omega = u + f.\]
The toric metric $g$ is then obtained via the Hessian of $u_\omega$. If $\widetilde{\phi}_t$ is a family of K\"{a}hler potentials for $\widetilde{\omega_t}$, we have that $\phi = \widetilde{\phi}_t + \psi_t$ is a family of K\"{a}hler potentials for $\omega_t$ which are related to toric potentials $u_{\omega_t}$ via Legendre transform. Again, we have the difference:
\[u_{\omega_t} = u_t + f_t.\]
It follows that an understanding of the $C^2$-regularity of the associated smooth functions $f_t$ is a direct obstruction to transferring the estimate from \Cref{thm:main} to the Kähler-Ricci flow itself. In particular, uniform control of the second derivatives of $\psi_t$ along the collapsing fibers would provide the natural bridge between the symplectic model and the sharp Kähler-Ricci flow estimate.

\newpage
%%%%%%%%%%%%%%%%%%%%%%%%%%%%%%%%%%%%%%%%%%%%%%%%%%%
\bibliographystyle{plain}
\bibliography{main}
%%%%%%%%%%%%%%%%%%%%%%%%%%%%%%%%%%%%%%%%%%%%%%%%%%%
\end{document}